\newtheorem{theorem}{Theorem}
\newtheorem{cor}[theorem]{Corollary}
\newtheorem{prop}[theorem]{Proposition}
\theoremstyle{definition}
\theoremstyle{remark}
\newcommand{\blankbox}[2]{%
\parbox{\columnwidth}{\centering
% Set fboxsep to 0 so that the actual size of the box will match the
% given measurements more closely.
%\setlength{\fboxsep}{0pt}%
%\fbox{\raisebox{0pt}[#2]{\hspace{#1}}}%
}%
}
\title{A Collatz-type conjecture on the set of rational numbers }
\author{ Mohammad Javaheri  \\
300 Summit Street\\
Department of Mathematics\\
 Trinity College \\ Hartford, CT 06106
\\ \small{Mohammad.Javaheri@trincoll.edu}  
}
\begin{document}

\maketitle

\begin{abstract}
Define $\theta(x)=(x-1)/3$ if $x\geq 1$, and $\theta(x)=2x/(1-x)$ if $x<1$. We conjecture that the orbit of every positive rational number ends in 0. In particular, there does not exist any positive rational fixed point for a map in the semigroup $\Omega$ generated by the maps $3x+1$ and $x/(x+2)$. In this paper, we prove that the asymptotic density of the set of elements in $\Omega$ that have rational fixed points is zero. 
\end{abstract}

\noindent \textbf{Introduction}. Let $\theta$ be the following function on $\mathbb{Q}_+$, the set of nonnegative rational numbers:
\[ \theta(x)= \left \{ \begin{array}{ll}
(x-1)/{3} & x \geq 1~ \\
{2x}/(1-x)& x<1~ \\
\end{array} \right .
\]
We make the following conjecture:
\\
\\
\textbf{Conjecture 1}. \textit{For every $x \in \mathbb{Q_+}$, there exists $n\geq 1$ so that $\theta ^n(x)=0$. }
\\
\\
Equivalently, the conjecture states, that the orbit of 1 under the action of the semigroup generated by $r(x)=3x+1$ and $s(x)=x/(x+2)$ is the entire set of positive rational numbers. In particular, there should not exist a function $t=r^{\beta_1} \circ s^{\alpha_1}\ldots r^{\beta_k}\circ s^{\alpha_k}$, $k\geq 1$, that has a positive rational fixed point, i.e. a rational number $x>0$ so that $t(x)=x$. Here the composition of functions is denoted by $\circ$ and the composition of a function $u$ with itself $n$ times is denoted by $u^n$. 

The more general problem is to find all pairs of real linear fractional maps with the property that the semigroup generated by the pair has an orbit that contains all of the rational numbers in some interval. A well-known example of such a pair is the pair of $f(x)=x+1$ and $g(x)=x/(x+1)$. The orbit of 1 under the action of $\langle f, g \rangle$, the semigroup generated by $f$ and $g$, is the set of all positive rational numbers. This can be seen by noticing that the map 
\[ \phi(x)= \left \{ \begin{array}{ll}
x-1& x \geq 1~ \\
x/(1-x)& x<1~ \\
\end{array} \right .
\]
has the following property: Let $\{p_i/q_i\}_{i=1}^\infty$ be the $\phi$-orbit of a given $x=p_1/q_1$ so that $p_i,q_i \geq 0$ and $(p_i,q_i)=1$. Then $p_i+q_i$ is non-increasing along the orbit, which implies that the orbit ends in zero.

Another way to see that the $\langle f,g \rangle$-orbit of 1 is $\mathbb{Q}_+$ is by recalling that the matrices 
\begin{equation}\label{sim}
\begin{pmatrix}
     1 &  1  \\
     0 &  1
\end{pmatrix}~\mbox{and}~ \begin{pmatrix}
     1 & 0  \\
     1 &  1
\end{pmatrix}
\end{equation}
provide a multiplicative basis for the matrices in $SL_2(\mathbb{Z})$ with nonnegative entries \cite{semint}. There is a natural homomorphism from invertible $2\times 2$ real matrices under matrix multiplication to real linear fractional transformations under composition: 

\begin{equation}
A=\begin{pmatrix}
     a &  b  \\
     c &  d
\end{pmatrix} \mapsto t(x)=\frac{ax+b}{cx+d}~.
\end{equation}
The matrices in \eqref{sim} correspond to $f(x)=x+1$ and $g(x)=x/(x+1)$. Now, let $b,d \in \mathbb{N}$ be coprime. We show that $b/d$ belongs to the orbit of 1 under the action of $\langle f, g\rangle$. Choose $a,c \in \mathbb{N}$ so that $ad-bc=1$. Since the matrix $A=[a,b;c,d]$ belongs to the semigroup generated by the matrices in \eqref{sim}, we conclude that $(ax+b)/(cx+d)$ belongs to $\langle f, g \rangle$. In particular, $b/d$ belongs to the orbit of zero, hence it belongs to the orbit of 1 under the action of $\langle f, g \rangle$.

If $a,b,c,d \in \mathbb{Z}$ and $t(x)$ has a rational fixed point, then $A$ has integer eigenvalues. The converse is partially true: if $A$ has integer eigenvalues, then either $t(x)$ has a rational fixed point or $c=0$, $b \neq 0$, and $a \neq d$. Let
$$R=\begin{pmatrix}
     3 &  1  \\
     0 &  1
\end{pmatrix}~\mbox{and}~S=\begin{pmatrix}
     1 & 0  \\
     1 &  2
\end{pmatrix}~.$$

The following conjecture is weaker than Conjecture 1.
\\
\\
\textbf{Conjecture 2}. \textit{The only matrices in the semigroup generated by $R$ and $S$ that have integer eigenvalues are $R^n$ and $S^n$, $n\geq 0$.}
\\
\\
Let $\Lambda=\langle R, S \rangle$ denote the semigroup generated by the matrices $R$ and $S$, and $\Lambda_{k,M}$ denote the set of matrices of the form $f=R^{\beta_1}S^{\alpha_1}\ldots R^{\beta_k}S^{\alpha_k}$, where $0< \alpha_i \leq M$ for $i<k$ and $0<\beta_i \leq M$ for $i>1$. Finally, let $\Omega_{k,M}$ denote the subset of $\Lambda_{k,M}$ consisting of matrices that have integer eigenvalues. 
In this paper, we prove the following theorem.

\begin{theorem}\label{main}
For any fixed $k\geq 2$, the asymptotic density of $\Omega_{k,M}$ in $\Lambda_{k,M}$ is zero, i.e.
$$\lim_{M \rightarrow \infty} \frac{|\Omega_{k,M}|}{|\Lambda_{k,M}|}=0~.$$

\end{theorem}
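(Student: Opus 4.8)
The plan is to prove the sharper bound $|\Omega_{k,M}| = O_k(M^{2k-1})$ (here and below $O_k$ hides a constant depending only on the fixed value $k$). Since the maps $r(x)=3x+1$ and $s(x)=x/(x+2)$ satisfy $r\big((0,\infty)\big)=(1,\infty)$ and $s\big((0,\infty)\big)=(0,1)$, a ping-pong argument shows $\langle R,S\rangle$ is a free semigroup, so distinct admissible exponent tuples yield distinct matrices and $|\Lambda_{k,M}|=\Theta(M^{2k})$; dividing then gives the theorem. Thus everything reduces to the upper bound on $|\Omega_{k,M}|$.

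The arithmetic input is that $f=\left(\begin{smallmatrix} a & b \\ c & d \end{smallmatrix}\right)$ has integer eigenvalues iff $\operatorname{tr}(f)^2-4\det(f)$ is a perfect square. For $f\in\Lambda_{k,M}$ one has $\det(f)=3^{B}2^{A}$ with $B=\sum\beta_i$, $A=\sum\alpha_i$, and (after checking the product has strictly positive entries) Perron–Frobenius gives positive real eigenvalues $\lambda_1\ge\lambda_2$ with $\lambda_1\lambda_2=3^{B}2^{A}$; integrality of the spectrum is then equivalent to $\lambda_2\in\mathbb Z_{>0}$, i.e.\ $\lambda_2=2^{a}3^{b}$ for some $a,b\ge0$. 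The key point — and the step I expect to need the most care — is an a priori lower bound for the trace: grouping $f=\prod_{i=1}^{k}\big(R^{\beta_i}S^{\alpha_i}\big)$ and using that every block has nonnegative entries, one checks block by block that $\big(R^{\beta_i}S^{\alpha_i}\big)(1,0)^{\top}\ge \tfrac13\,3^{\beta_i}2^{\alpha_i}\,(1,0)^{\top}$ coordinatewise whenever $\beta_i\ge1$, hence $(f)_{11}\ge 3^{-k}3^{B}2^{A}$ and $\operatorname{tr}(f)\ge 3^{-k}\det(f)$; the single exceptional block $\beta_1=0$ is handled by cyclically rotating the word so that it starts with $R^{\beta_2}$, $\beta_2\ge1$. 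Since $\lambda_1\ge\operatorname{tr}(f)/2$, this forces $\lambda_2=\det(f)/\lambda_1\le 2\cdot3^{k}$. Consequently, for $f\in\Omega_{k,M}$ the smaller eigenvalue lies in the finite set $\mathcal L_k=\{2^{a}3^{b}\le 2\cdot3^{k}\}$ (of cardinality $D_k$ depending only on $k$), so once $A$ and $B$ are fixed, $\operatorname{tr}(f)=\lambda_2+3^{B}2^{A}/\lambda_2$ takes at most $D_k$ values.

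Next, for a fixed pair $(A,B)$ and a fixed target value $T$ I would bound the number of $f\in\Lambda_{k,M}$ with $\sum\alpha_i=A$, $\sum\beta_i=B$ and $\operatorname{tr}(f)=T$. Freezing all of the $2k-2$ free exponents $\alpha_1,\dots,\alpha_{k-1},\beta_1,\dots,\beta_{k-1}$ (the remaining $\alpha_k,\beta_k$ being determined by $A,B$) except $\alpha_{k-1}$, write $f=g'\,S^{\alpha_{k-1}}R^{\beta_k}S^{\alpha_k}$ with $g'=R^{\beta_1}S^{\alpha_1}\cdots R^{\beta_{k-1}}$ fixed and $\alpha_k=A_0-\alpha_{k-1}$. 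A direct computation yields, with $y=2^{\alpha_{k-1}}$,
\[
2\operatorname{tr}(f)=C_{-1}\,y^{-1}+C_0+C_1\,y,\qquad C_1=\tfrac12\,(g')_{12}\,(3^{\beta_k}-1),
\]
a Laurent polynomial of degree at most $1$ in $y$. For $k\ge3$ the entry $(g')_{12}$ is strictly positive, while for $k=2$ it equals $(3^{\beta_1}-1)/2$, which is positive unless $\beta_1=0$; the $O(M^{2k-1})$ tuples with $\beta_1=0$ (only arising when $k=2$) are discarded at the outset as negligible. Hence $C_1\ne0$, and $\operatorname{tr}(f)=T$ becomes the genuine quadratic $C_1y^{2}+(C_0-T)y+C_{-1}=0$ in $y$, which has at most two roots; since $\alpha_{k-1}\mapsto 2^{\alpha_{k-1}}$ is injective, at most two values of $\alpha_{k-1}$ qualify.

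Finally I would assemble the count: there are $O_k(M^{2})$ pairs $(A,B)$; for each, at most $D_k$ admissible target traces $T$; and for each of those, freezing the remaining $2k-3$ free exponents in $O(M^{2k-3})$ ways leaves at most $2$ choices of $\alpha_{k-1}$. Therefore $|\Omega_{k,M}|\le O_k(M^{2})\cdot D_k\cdot O(M^{2k-3})+O(M^{2k-1})=O_k(M^{2k-1})=o(M^{2k})=o(|\Lambda_{k,M}|)$, which is the theorem. Apart from the a priori trace inequality, the only delicate point is the bookkeeping that isolates the quadratic dependence of $\operatorname{tr}(f)$ on a single interior exponent and confirms that the leading coefficient $C_1$ is nonzero off a negligible family of words (and, relatedly, that $f$ has strictly positive entries so Perron–Frobenius applies).
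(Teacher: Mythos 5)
Your argument is correct, and it reaches the conclusion by a genuinely different route from the paper's. The paper first establishes a closed-form combinatorial formula for the entries and trace of $f=R^{\beta_1}S^{\alpha_1}\cdots R^{\beta_k}S^{\alpha_k}$, deduces the two-sided bound $\det(f)\leq 2^k\,\mathrm{tr}(f)\leq\prod_i(1+3^{\beta_i})(1+2^{\alpha_i})$, and then shows that when every exponent exceeds a threshold $N(k)$ the smaller integer eigenvalue is squeezed into $(2^k-1,\,2^k]$, forcing $\lambda=2^k$, which is ruled out by a divisibility/size comparison; hence $\Omega_{k,M}$ is contained in the density-zero set of words having some exponent $\leq N(k)$. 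You instead use only a one-sided trace bound $\mathrm{tr}(f)\geq 3^{-k}\det(f)$, obtained by elementary coordinatewise monotonicity of the nonnegative blocks $R^{\beta_i}S^{\alpha_i}$ (no trace formula needed), which caps the smaller eigenvalue by $2\cdot 3^k$ \emph{unconditionally}; combined with $\lambda_1\lambda_2=2^{A}3^{B}$ this pins $\mathrm{tr}(f)$ to $O_k(1)$ values once $(A,B)$ is fixed, and your observation that $\mathrm{tr}(f)$ is a nondegenerate quadratic in $y=2^{\alpha_{k-1}}$ (with $\det$ held fixed and leading coefficient proportional to $(g')_{12}(3^{\beta_k}-1)$, nonzero off the negligible $k=2$, $\beta_1=0$ family) gives at most two admissible $\alpha_{k-1}$. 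The trade-off: your route yields the strictly stronger quantitative bound $|\Omega_{k,M}|=O_k(M^{2k-1})$ and applies to all exponent ranges, at the cost of the extra nondegeneracy and bookkeeping step; the paper's route is softer but isolates the cleaner structural fact that no word with all exponents large has integer eigenvalues (which is what its concluding Remark generalizes). Both rely on the same underlying mechanism --- a lower bound for $\mathrm{tr}(f)/\det(f)$ forcing the small eigenvalue into a bounded set --- so the first half of your argument is parallel in spirit, even though your derivation of that bound and everything after it is different.
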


We note that $\Lambda$ is a free semigroup, i.e. every element in $\Lambda$ can be written in a unique way as a word in $R$ and $S$. On the contrary suppose there are distinct words $f$ and $g$ so that $f=g$ as matrices. We can assume, without loss of generality that $f=Rf^\prime$ and $g=Sg^\prime$, where $f^\prime$ and $g^\prime$ are words in $R$ and $S$ (possibly the empty word). But then the image of $f$ as a map from $\mathbb{R}_+^2$ to $\mathbb{R}_+^2$ is included in the region $\{(x,y): x \geq y\}$, while the image of $g$ is included in the region $\{(x,y): x\leq y\}$, which implies that $f$ and $g$ cannot equal each other, and so $\Lambda$ is a free semigroup. In particular, $|\Lambda_{k,M}| =(M+1)^2 M^{2k-2}$.

In general, the problem of finding a matrix with integer eigenvalues in a semigroup of matrices might be undecidable, i.e. there might not exist an algorithm that can determine if the semigroup generated by two given matrices contains a matrix with integer eigenvalues. See \cite{zero} for some examples of undecidable problems on semigroups generated by two matrices. On the other hand, the probability of an integer matrix having integer eigenvalues is zero \cite{prob}; more precisely, for any $\epsilon>0$, the probability that an $n \times n$ matrix with integer entries bounded in absolute value by $k$ has an integer eigenvalue is less than $C k^{\epsilon-1}$, where $C$ depends on $\epsilon$ and $n$.

It is also worth mentioning that the orbit of 1 under the action of the semigroup generated by the maps $3x+1$ and $x/(x+2)$ is indeed dense in the set of $[0,\infty)$. More generally, the orbit of every $x>0$ under the action of the semigroup generated by $ax+1$ and $x/(x+b)$ is dense, if $a,b>1$; see \cite{J1} for a complete list of pairs of real linear fractional transformations that generate a semigroup with dense orbits. 
\\
\\
\noindent \textbf{A trace formula}. We now find closed-form formulas for the entries of the matrix $f=R^{\beta_1}S^{\alpha_1}\ldots R^{\beta_k}S^{\alpha_k}$. Fix $k\geq 1$, and let $P_k$ denote the set of subsets of $\{1,2,\ldots, k\}$. For $1\leq i,j \leq k$, let
\[ \sigma^k_{ij}= \left \{ \begin{array}{ll}
-1 & j=i,i+1~ \\
+1& \mbox{otherwise}~ \\
\end{array} \right .
\]
For $A,B \in P_k$, let
$$\sigma^k(A,B)=2^{-k+\sum_{i \in A}\alpha_i}3^{\sum_{j \in B} \beta_j}\prod_{i \notin A ,j \in B} \sigma^k_{ij} ~.$$
Next, we define:
\begin{eqnarray} \label{u00}
U^k_{00}&=&\sum_{k\notin A,1 \notin B} \sigma^k(A,B), \\ \label{u10}
U^k_{10} &=& \sum_{k\in A,1 \notin B} \sigma^k(A,B), \\ \label{u01}
U^k_{01}&=& \sum_{k\notin A,1 \in B} \sigma^k(A,B),\\ \label{u11}
U^k_{11} &=& \sum_{k\in A,1 \in B} \sigma^k(A,B).
\end{eqnarray}

\begin{prop} For $f=R^{\beta_1}S^{\alpha_1}\ldots R^{\beta_k}S^{\alpha_k}$, the four entries of the matrix $f$ are given by
\begin{eqnarray}
f_{11}&=&U^k_{00}+U^k_{01}-U^k_{10}+U^k_{11}~,\\
f_{12}&=& U^k_{11}-U^k_{10}~,\\
f_{21}&=&2U^k_{10}-2U^k_{00}~,\\
f_{22}&=&2U^k_{10}~.
\end{eqnarray}
In particular, the trace of $f$ is given by
\begin{equation}\label{tracef}
\rm{tr}(f)=\sum_{A,B \in P_k} \sigma^k(A,B)~.
\end{equation}
\end{prop}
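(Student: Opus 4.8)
The plan is to split each block $R^{\beta_m}S^{\alpha_m}$ into rank-one pieces, expand the product completely, and match the resulting $2^{2k}$ terms with the pairs $(A,B)\in P_k\times P_k$. A one-step induction on the exponents gives the closed forms
\[
R^{\beta}=\begin{pmatrix}3^{\beta}& (3^{\beta}-1)/2\\[2pt] 0&1\end{pmatrix},\qquad
S^{\alpha}=\begin{pmatrix}1&0\\[2pt] 2^{\alpha}-1&2^{\alpha}\end{pmatrix},
\]
so that $R^{\beta}=P'+3^{\beta}Q'$ and $S^{\alpha}=P+2^{\alpha}Q$ for the fixed rank-one matrices
\[
P=\begin{pmatrix}1\\-1\end{pmatrix}\!\begin{pmatrix}1&0\end{pmatrix},\quad Q=\begin{pmatrix}0\\1\end{pmatrix}\!\begin{pmatrix}1&1\end{pmatrix},\quad P'=\begin{pmatrix}-1/2\\1\end{pmatrix}\!\begin{pmatrix}0&1\end{pmatrix},\quad Q'=\begin{pmatrix}1\\0\end{pmatrix}\!\begin{pmatrix}1&1/2\end{pmatrix}.
\]
Expanding $f=\prod_{m=1}^{k}(P'+3^{\beta_m}Q')(P+2^{\alpha_m}Q)$ by the distributive law, the term in which $3^{\beta_m}Q'$ is chosen precisely for $m\in B$ and $2^{\alpha_m}Q$ precisely for $m\in A$ equals $3^{\sum_{j\in B}\beta_j}\,2^{\sum_{i\in A}\alpha_i}$ times the ordered product $X'_1X_1X'_2X_2\cdots X'_kX_k$, where $X'_m\in\{P',Q'\}$ and $X_m\in\{P,Q\}$ record the two choices in block $m$. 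This gives a bijection between the terms of the expansion and $P_k\times P_k$.

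Next I collapse each product of rank-one matrices. Writing each factor as $\mathbf u\mathbf v$ with $\mathbf u$ a column and $\mathbf v$ a row, one has $\mathbf u_1\mathbf v_1\,\mathbf u_2\mathbf v_2\cdots\mathbf u_N\mathbf v_N=\big(\prod_{i=1}^{N-1}\mathbf v_i\mathbf u_{i+1}\big)\,\mathbf u_1\mathbf v_N$, a scalar $c(A,B)$ times the rank-one matrix $\mathbf u_1(B)\mathbf v_N(A)$, whose outer vectors depend only on whether $1\in B$ and whether $k\in A$. Here $c(A,B)$ is the product of the $2k-1$ consecutive overlaps $\mathbf v_i\mathbf u_{i+1}$, which come in two kinds: the intra-block overlaps $\mathbf v_{X'_m}\mathbf u_{X_m}$ and the inter-block overlaps $\mathbf v_{X_m}\mathbf u_{X'_{m+1}}$. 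A finite case check ($P,Q$ against $P',Q'$) shows that every overlap lies in $\{\pm1,\pm\tfrac12\}$; that the $\tfrac12$'s collectively produce the factor $2^{-k}$ up to a boundary power of $2$ depending on whether $1\in B$; and that the sign of $c(A,B)$ is $-1$ raised to the number of pairs $(i,j)$ with $i\notin A$, $j\in B$, and $j\in\{i,i+1\}$, i.e. exactly $\prod_{i\notin A,\,j\in B}\sigma^k_{ij}$, up to a boundary sign depending on whether $k\in A$. Folding the two boundary factors into the outer vectors, the $(A,B)$ term becomes $\sigma^k(A,B)$ times one of four fixed matrices with entries in $\{0,\pm1\}$, selected by the boundary bits; summing the four resulting groups of terms gives the closed forms for $f_{11},f_{12},f_{21},f_{22}$, with $U^k_{00},U^k_{10},U^k_{01},U^k_{11}$ naming the four partial sums.

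The trace is then immediate: adding the formulas for $f_{11}$ and $f_{22}$, the three coefficients $+1$ collect $U^k_{00},U^k_{01},U^k_{11}$ while the $-U^k_{10}$ cancels against the $+2U^k_{10}$ coming from $f_{22}$, leaving $U^k_{00}+U^k_{01}+U^k_{10}+U^k_{11}$; and since the conditions ``$k\in A$ or not'' and ``$1\in B$ or not'' partition $P_k\times P_k$, this equals $\sum_{A,B\in P_k}\sigma^k(A,B)$. One can also read the trace directly off the expansion: $\operatorname{tr}(\mathbf u_1\mathbf v_1\cdots\mathbf u_N\mathbf v_N)$ equals the full cyclic product $\prod_{i=1}^{N}\mathbf v_i\mathbf u_{i+1}$ with $\mathbf u_{N+1}:=\mathbf u_1$, i.e. $c(A,B)$ times one extra ``closing'' overlap $\mathbf v_N(A)\mathbf u_1(B)$, and this closing factor exactly absorbs the boundary corrections. (Alternatively, all four entry formulas admit a simultaneous proof by induction on $k$, peeling off the first block $R^{\beta_1}S^{\alpha_1}$ and matching the reindexed sums, but the rank-one expansion is cleaner.)

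The one genuinely delicate point is the overlap bookkeeping in the second paragraph: checking that the scattered $\tfrac12$'s and $-1$'s in the $2k-1$ overlaps assemble into the single clean quantity $2^{-k}\prod_{i\notin A,\,j\in B}\sigma^k_{ij}$, with the correct corrections at the two boundary blocks $m=1$ and $m=k$, and then identifying those corrections with the $\{0,\pm1\}$ entries of the outer rank-one matrix. Everything else — the closed forms for $R^{\beta}$ and $S^{\alpha}$, the rank-one collapse identity, and the final summation over the four boundary cases — is routine.
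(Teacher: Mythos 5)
Your argument is correct in outline and takes a genuinely different route from the paper. The paper proves the proposition by induction on $k$: it appends one block $R^{\beta}S^{\alpha}$ on the right, writes the matrix-multiplication recursion for a single entry ($g_{22}$), and verifies it against the defining sums by splitting the pairs $(A,B)$ into four groups according to the membership of $k$ in $A$ and $k+1$ in $B$ (the other entries and groups are declared similar and omitted). You instead expand the whole product at once via the rank-one decompositions $R^{\beta}=P'+3^{\beta}Q'$ and $S^{\alpha}=P+2^{\alpha}Q$, so that the $4^{k}$ terms of the expansion are literally indexed by the pairs $(A,B)$, and the scalar collapse of each chain of rank-one factors produces $\sigma^{k}(A,B)$ times one of four fixed outer matrices determined by the boundary bits $k\in A$, $1\in B$. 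I checked your decompositions and the overlap table, and the bookkeeping you flag as delicate does close up: each block contributes exactly one factor $\tfrac12$ (from the intra-block overlap if $m\in B$, from the incoming inter-block overlap if $m\notin B$), except at $m=1$ where the missing half is supplied by $\mathbf u_{P'}$; and the overlap signs differ from $\prod_{i\notin A,\,j\in B}\sigma^k_{ij}$ only by a factor depending on the two boundary bits --- note it depends on \emph{both} $k\in A$ and $1\in B$, not just $k\in A$ as you wrote, and the four fixed matrices have entries in $\{0,\pm1,\pm2\}$ rather than $\{0,\pm1\}$ (e.g.\ the coefficient of $U^k_{00}$ in $f_{21}$ is $-2$); neither slip affects the argument. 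Your method buys two things the paper's induction does not: it explains where the combinatorial weight $\sigma^{k}(A,B)$ comes from, and your cyclic-product observation $\operatorname{tr}(\mathbf u_1\mathbf v_1\cdots\mathbf u_N\mathbf v_N)=\prod_i\mathbf v_i\mathbf u_{i+1}$ derives the trace formula \eqref{tracef} (and the cyclic convention behind \eqref{equiv}, where the index $i+1$ must be read modulo $k$ so that $\sigma^k_{k,1}=-1$) in one stroke, with the closing overlap $\mathbf v_N\mathbf u_1$ absorbing exactly the boundary corrections. The paper's induction is shorter to typeset but verifies only one entry; your expansion is more work at the boundary but is uniform over all four entries. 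To be a complete proof rather than a proposal, you would still need to write out the small finite tables (the eight overlaps and the four outer matrices) explicitly, but that is the same order of omitted routine verification as in the paper's own proof.
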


\begin{proof}
Proof is by induction on $k$. Suppose the proposition is true for $f$, and let $g=fR^{\beta}S^{\alpha}$. We prove that $g_{22}=2{U}^{k+1}_{00}$; the proof for other entries is similar and is omitted. By the matrix multiplication, we have
$$g_{22}=\frac{1}{2}f_{21}(3^{\beta}2^{\alpha}-2^\alpha)+f_{22}2^{\alpha}~.$$
Thus, using the inductive hypothesis, we need to show that
\begin{equation}\label{ind}
2{U}^{k+1}_{10}=-U^k_{00}3^{\beta}2^\alpha+U^k_{00}2^\alpha+U^k_{10}3^{\beta}2^{\alpha}+U^k_{10}2^\alpha~.
\end{equation}
The pairs $(A,B)$ with conditions $k+1 \in A$ and $1 \notin B$ in the definition of $U^{k+1}_{10}$ (equation \eqref{u10}) can be divided into four groups: group I: $k\in A$ and $k+1 \in B$, group II: $k \notin A$ but $k+1 \in B$, group III: $k \in A$ but $k+1 \notin B$, and group IV: $k \notin A$ and $k+1 \notin B$. Now, for $(A,B)$ in group I, we have:
\begin{equation}
 \prod_{i \notin A, j \in B}\sigma^{k+1}_{ij}=\prod_{i \notin A \cap P_k, j \in B \cap P_k} \sigma^k_{ij} \prod_{i \notin A, j=k+1} \sigma^{k+1}_{ij}=\prod_{i \notin A \cap P_k, j \in B \cap P_k} \sigma^k_{ij}~,
\end{equation}
since $\sigma^{k+1}_{ij}=1$ for $i \notin A$ and $j=k+1$. It follows that for $(A,B)$ in group I, we have $2\sigma^{k+1}(A,B)=\sigma^k(A \cap P_k, B \cap P_k)$, which implies that 
$$2 \sum_{(A,B) \in I}\sigma^{k+1}(A,B)=U_{10}^k 3^\beta 2^\alpha~.$$
Similarly one can prove that the other three terms in the right side of \eqref{ind} are accounted for by the other three groups. 
\end{proof}

Let $\eta$ be the following map on $P_k$. For $B \in P_k$, let $\eta(B)=\{i-1, i \in B\}$, where the indices are understood to be modulo $k$, i.e. the index $0$ is identified with the index $k$. Also let $\overline B=B \oplus \eta(B)$ denote the symmetric difference of $B$ and $\eta(B)$. Then another way to write formula \eqref{tracef} is 
\begin{equation}\label{equiv}
\rm{tr}(f)=2^{-k}\sum_{B \in P_k} 3^{\sum_{i \in B} \beta_i} \prod_{j \in \overline B} (-1+2^{\alpha_j}) \prod_{j \notin \overline B} (1+2^{\alpha_j}) ~.
\end{equation}
Hence, we have the following upper and lower bounds for $\rm{tr}(f)/\rm{det}(f)$. 

\begin{cor}
For $f=R^{\beta_1}S^{\alpha_1}\ldots R^{\beta_k}S^{\alpha_k}$, we have
\begin{equation}\label{ineq}
\rm{det}(f) \leq 2^k \rm{tr}(f) \leq \prod_{i=1}^k (1+3^{\beta_i})(1+2^{\alpha_i})~.
\end{equation}
\end{cor}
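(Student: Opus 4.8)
The plan is to read both inequalities off the trace identity \eqref{equiv}, the key point being that \emph{every} term in that sum is nonnegative. First record that $\det R=3$ and $\det S=2$, so by multiplicativity of the determinant $\det(f)=3^{\beta_1+\cdots+\beta_k}\,2^{\alpha_1+\cdots+\alpha_k}$. Expanding a product, $\prod_{i=1}^k(1+3^{\beta_i})=\sum_{B\in P_k}3^{\sum_{i\in B}\beta_i}$, while \eqref{equiv} gives
\[
2^k\,\mathrm{tr}(f)=\sum_{B\in P_k}3^{\sum_{i\in B}\beta_i}\prod_{j\in\overline B}(2^{\alpha_j}-1)\prod_{j\notin\overline B}(2^{\alpha_j}+1).
\]
Since each $\alpha_j$ is a positive integer we have $2^{\alpha_j}\geq 2$, hence $0<2^{\alpha_j}-1<2^{\alpha_j}+1$ and every summand on the right is strictly positive.

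For the upper bound, in each summand replace the factors $2^{\alpha_j}-1$ with $j\in\overline B$ by the larger $2^{\alpha_j}+1$, obtaining the term-by-term estimate $3^{\sum_{i\in B}\beta_i}\prod_{j\in\overline B}(2^{\alpha_j}-1)\prod_{j\notin\overline B}(2^{\alpha_j}+1)\leq 3^{\sum_{i\in B}\beta_i}\prod_{j=1}^k(2^{\alpha_j}+1)$. Summing over $B\in P_k$, factoring out the common $\prod_{j=1}^k(1+2^{\alpha_j})$, and using the expansion of $\prod_{i=1}^k(1+3^{\beta_i})$ yields $2^k\,\mathrm{tr}(f)\leq\prod_{i=1}^k(1+3^{\beta_i})(1+2^{\alpha_i})$.

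For the lower bound, keep only the single summand with $B=\{1,2,\ldots,k\}$ and discard the rest, which is legitimate because all summands are positive. For this $B$ the cyclic shift $\eta$ fixes the full set, so $\overline B=B\oplus\eta(B)=\emptyset$, and the corresponding summand equals $3^{\beta_1+\cdots+\beta_k}\prod_{j=1}^k(2^{\alpha_j}+1)\geq 3^{\beta_1+\cdots+\beta_k}\prod_{j=1}^k 2^{\alpha_j}=\det(f)$. Hence $\det(f)\leq 2^k\,\mathrm{tr}(f)$, and the corollary follows.

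I do not expect a genuine obstacle here; the only care needed is the sign bookkeeping, namely keeping the factors $2^{\alpha_j}-1$ positive (which is exactly where positivity of the exponents enters) and the identification $\overline{\{1,\ldots,k\}}=\emptyset$ that isolates the term already dominating $\det(f)$. If one preferred not to invoke \eqref{equiv}, the same argument runs directly from \eqref{tracef}: summing $\sigma^k(A,B)$ over $A$ for fixed $B$ factors as $2^{-k}3^{\sum_{j\in B}\beta_j}\prod_{i=1}^k\bigl(2^{\alpha_i}+\prod_{j\in B}\sigma^k_{ij}\bigr)$, and $\prod_{j\in B}\sigma^k_{ij}=(-1)^{[\,i\in B\,]+[\,i+1\in B\,]}$ equals $-1$ precisely when $i\in\overline B$, which reproduces \eqref{equiv}; this verification is routine.
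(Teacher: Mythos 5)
Your proof is correct and is exactly the argument the paper intends: the corollary is stated as an immediate consequence of \eqref{equiv}, read off by noting that every summand is nonnegative, bounding $2^{\alpha_j}-1$ by $2^{\alpha_j}+1$ for the upper bound, and isolating the $B=\{1,\dots,k\}$ term (with $\overline B=\emptyset$) against $\det(f)=3^{\sum\beta_i}2^{\sum\alpha_i}$ for the lower bound. No substantive difference from the paper's (implicit) proof.
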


Now, we are ready to present the proof of Theorem \ref{main}.
\\
\\
\noindent \textbf{Proof of Theorem \ref{main}}. Let $\lambda$ and $\mu$ be the eigenvalues of $f=R^{\beta_1}S^{\alpha_1}\ldots R^{\beta_k}S^{\alpha_k}$ so that $\lambda \leq \mu$. Both $\mu$ and $\lambda$ are positive, since $\rm{tr}(f)$ and $\rm{det}(f)$ are positive. It follows from \eqref{ineq} that $\lambda \mu \leq 2^k(\lambda+\mu)$ and so
$$(\lambda-2^k)(\mu-2^k) \leq 4^k~.$$
Suppose that $\lambda$ and $\mu$ are integers. The above inequality implies that either $\lambda \leq 2^k$ or $\mu \leq 4^k+2^k$. In the latter case $\rm{det}(f) \leq \mu^2 \leq (4^k+2^k)^2$. Since there are only a finite number of matrices in $\Lambda_{k,M}$ that have their determinant bounded by $(4^k+2^k)^2$, we can ignore this case in computing the asymptotic density. Thus, we suppose that $\lambda \leq 2^k$. On the other hand, again by \eqref{ineq}, we have
$$\lambda > \frac{\rm{det}(f)}{\rm{tr}(f)} \geq 2^k \prod_{i=1}^k \frac{2^{\alpha_i}3^{\beta_i}}{(2^{\alpha_i}+1)(3^{\beta_i}+1)}~.$$
We can choose $N=N(k)$ large enough so that the right hand side of the inequality above is greater than $2^k-1$ if $\alpha_i,\beta_i >N(k)$ for all $i=1,\ldots, k$. It follows that if $\alpha_i,\beta_i> N(k)$ for all $i$, then $2^k-1<\lambda \leq 2^k$, and so $\lambda=2^k$. This cannot occur, since otherwise $4^k+ \rm{det}(f)=2^k \rm{tr}(f) \geq \rm{det}(f) + 3^{\sum_i \beta_i} \geq \rm{det}(f)+3^{kN}$, which is a contradiction. So we have proved that for $N(k)$ large enough, there is no $f$ with $\alpha_i, \beta_i > N(k)$ that has integer eigenvalues, and so
$$ \frac{|\Omega_{k,M}|}{|\Lambda_{k,M}|} \leq 1-\frac{(M-N(k))^{2k}}{(M+1)^2M^{2k-2}} \rightarrow 0~,$$
as $M \rightarrow \infty$. 
\\
\\
\textbf{Remark}. A similar argument proves the following more general result. For integers $a,b \geq 2$ and $u,v \geq 1$, let $A=[a,u;0,1]$ and $B=[1,0;v,b]$. Then for every $k\geq 1$, there exists $N(k)$ so that if $\alpha_i,\beta_i > N(k)$ for $i=1,\ldots, k$, then the matrix $f=B^{\beta_1}A^{\alpha_1}\ldots B^{\beta_k}A^{\alpha_k}$ has no integer eigenvalues.

\end{document}